\newtheorem{theorem}{\sc Theorem}[section]
\newtheorem{lemma}[theorem]{\sc Lemma}
\newtheorem{proposition}[theorem]{\sc Proposition}
\newtheorem{corollary}[theorem]{\sc Corollary}
\begin{document}

\title[]
{BFC-theorems for higher commutator subgroups}
\author[E. Detomi]{Eloisa Detomi}
\address{Dipartimento di Matematica ``Tullio Levi-Civita'', Universit\`a di Padova \\
 Via Trieste 63\\ 35121 Padova \\ Italy}
\email{detomi@math.unipd.it}
\author[M. Morigi]{Marta Morigi}
\address{Dipartimento di Matematica, Universit\`a di Bologna\\
Piazza di Porta San Donato 5 \\ 40126 Bologna\\ Italy}
\email{marta.morigi@unibo.it}
\author[P. Shumyatsky]{Pavel Shumyatsky}
\address{Department of Mathematics, University of Brasilia\\
Brasilia-DF \\ 70910-900 Brazil}
\email{pavel@unb.br}

\thanks{This research was partially supported by MIUR (Prin 2015: ``Group theory and applications"). 
The third author was also supported by FAPDF and CNPq.}

\subjclass[2010]{20E45; 20F12; 20F24.}
\keywords{Conjucagy classes, verbal subgroups, commutators.}

\begin{abstract}
 A BFC-group is a group in which all conjugacy classes are finite with bounded size. In 1954 B. H. Neumann discovered that if $G$ is a BFC-group
 then the derived group $G'$ is finite. Let $w=w(x_1,\dots,x_n)$ be a multilinear commutator. We study groups in which the conjugacy classes containing
 $w$-values are finite of bounded order. Let $G$ be a group and let $w(G)$ be the verbal subgroup of $G$ generated by all $w$-values. We prove that if $|x^G|\le m$ for every $w$-value $x$, then the derived subgroup of $w(G)$ is finite of order bounded by a 
function of $m$ and $n$.  If $|x^{w(G)}|\le m$ for every $w$-value $x$, then  $[w(w(G)),w(G)]$ is finite of order bounded by a 
function of $m$ and $n$. 
\end{abstract}

\maketitle
\section{Introduction} Given a group $G$ and an element $x\in G$, we write $x^G$ for the conjugacy class containing $x$. Of course, 
if the number of elements in $x^G$ is finite, we have $|x^G|=[G:C_G(x)]$. A group is said to be a BFC-group if its conjugacy classes are 
finite and of bounded size. One of the most famous of B. H. Neumann's theorems says that in a BFC-group the commutator subgroup $G'$ is 
finite \cite{bhn}. It follows that if $|x^G|\leq m$ for each $x\in G$, then the order of $G'$ is bounded by a number depending only on $m$. 
A first explicit bound for the order of $G'$ was found by J. Wiegold \cite{wie}, and the best known was obtained in 
\cite{gumaroti} (see also \cite{neuvoe} and \cite{sesha}).

The recent article \cite{dieshu} deals with groups $G$ in which conjugacy classes containing commutators are bounded. By a commutator we mean any element $x\in G$ which can be written in the form $$x=[x_1,x_2]=x_1^{-1}x_2^{-1}x_1x_2$$ for suitable $x_1,x_2\in G$. The results obtained in \cite{dieshu} can be summarized as follows.

\begin{theorem}\label{gla}
Let $m$ be a positive integer and $G$ a group. If $|x^G|\leq m$ for any commutator $x$, then $|G''|$ is finite and $m$-bounded. If $|x^{G'}|\leq m$ for any commutator $x$, then $|\gamma_3(G')|$ is finite and $n$-bounded.
\end{theorem}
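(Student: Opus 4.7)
My plan is to deduce both parts from B.\,H.\ Neumann's original BFC theorem. The main difficulty is that the hypothesis only bounds conjugacy classes of commutators, not of arbitrary elements of $G'$, so we need a mechanism to propagate the bound from commutators to the whole of $G'$ (or at least to a controlled generating set of it).

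For part (a), the starting observation is purely arithmetic: for any commutators $c_1,\dots,c_k$ the intersection $C_G(c_1)\cap\cdots\cap C_G(c_k)$ has index at most $m^k$ in $G$, so every element of $G'$ of commutator length at most $k$ has $G$-conjugacy class of size at most $m^k$. The crux is then to show that $G'$ can be generated, modulo a controlled central subgroup, by a number of commutators bounded in terms of $m$. Once such a width bound is in place, every element of $G'$ has $G$-conjugacy class of $m$-bounded size, and applying Neumann's BFC theorem to $G'$ yields $|G''|$ finite and $m$-bounded. To obtain the width bound, I would exploit that commutators lie in the FC-centre of $G$, and then apply the structure theory of FC-groups (Neumann's theorem that elements with boundedly many conjugates form a characteristic subgroup of controlled structure, together with Dietzmann's lemma on finite normal subsets of torsion elements) to the subgroup they generate.

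Part (b) runs in parallel with $G'$ in place of $G$: commutators generate all of $G'$ and their $G'$-conjugacy classes are bounded by $m$, so the same argument gives $G'/Z(G')$ finite of $m$-bounded order, and Schur's theorem bounds $|G''|$. Since $\gamma_3(G')\le G''$, the desired conclusion follows; if one wants a bound on $|\gamma_3(G')|$ that is not inflated by the detour through Schur, a refinement via Dietzmann's lemma applied in $G'$ to the finite normal subset of commutators of commutators (which become torsion once $G'/Z(G')$ is known to be finite) should tighten the bound.

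The hardest step in both parts is certainly the commutator-width bound, which moves beyond pure orbit counting and requires genuine FC-group structure theory: without it the BFC count does not close, whereas with it both conclusions follow by applying Neumann's theorem to the appropriate derived subgroup. A secondary subtlety is the reduction to a finitely generated $G$, which must be done so that the width bound transfers to arbitrary $G$; this is routine once one checks that the bounding function is independent of the generating set.
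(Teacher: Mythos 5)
There is a genuine gap, and it sits exactly where you yourself locate the difficulty: the commutator-width bound. Everything else in your plan is a one-line consequence of that bound, so the proposal effectively assumes a statement carrying the full weight of the theorem — and, in the form your part (a) needs, that statement is false. If $G'=\langle c_1,\dots,c_k\rangle Z$ with the $c_i$ commutators and $Z\le Z(G)$, then $\bigcap_i C_G(c_i)\le C_G(G')$, so $[G:C_G(G')]\le m^k$ would be $m$-bounded. But take $G$ to be the group of upper unitriangular $4\times 4$ matrices over $\mathbb{F}_{p^n}$ modulo the central subgroup $\{I+d\,e_{14}:\operatorname{Tr}(d)=0\}$, where $\operatorname{Tr}$ is the trace to $\mathbb{F}_p$. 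Here $\gamma_3(G)$ has order $p$ and $[G',G]\le\gamma_3(G)$, so every element of $G'$ — in particular every commutator — has at most $p$ conjugates, i.e.\ $m=p$; yet $C_G(G')$ (the elements with vanishing $(1,2)$ and $(3,4)$ entries) has index $p^{2n}$, unbounded as $n$ grows. So no argument can produce an $m$-bounded generating set of $G'$ modulo $Z(G)$, and in particular the tools you cite cannot: Dietzmann's lemma requires a \emph{finite} normal subset of \emph{torsion} elements, whereas the set of commutators is in general infinite and torsion-free, and the qualitative structure theory of FC-groups carries no quantitative width information. The same objection breaks part (b): from a generating set of $G'$ consisting of elements with at most $m$ conjugates one cannot conclude that $G'/Z(G')$ is $m$-bounded (extraspecial $p$-groups are generated by elements with $p$ conjugates and have arbitrarily large central quotient). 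Note also that your part (b) would yield $|G''|$ $m$-bounded, strictly stronger than the stated conclusion about $\gamma_3(G')=[G'',G']$ — a warning sign, since the hypothesis there only controls classes inside $G'$.

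For comparison, the paper obtains Theorem \ref{gla} as the case $w=[x_1,x_2]$ of Theorems \ref{aaa} and \ref{bbb}, via Theorem \ref{together}, and the mechanism is quite different from a width bound. Following Wiegold, one picks a $w$-value $a$ whose conjugacy class in $W=w(G)$ is \emph{maximal} among the values on a fixed tuple of cosets; maximality forces $[W,va]\le[W,a]$ for every $v$ in a normal subgroup of bounded index (Lemma \ref{basic-light}), while $[W,a]$ itself has bounded order by a Schur-type argument (Lemmas \ref{2.3} and \ref{2.3b}). The passage from these special values to all of $w_I(K;M)$ is then an induction over subsets of the variables using Corollaries \ref{uno2} and \ref{M2}. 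The point your plan misses is that the hypothesis is exploited not by bounding the classes of all elements of $G'$, but by comparing $[W,x]$ for varying $w$-values $x$ against a single extremal one; I would redirect your effort toward that maximality trick.
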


Here $G''$ denotes the second commutator subgroup of $G$ and $\gamma_3(G')$ denotes the third term of the lower central series of $G'$. Throughout the article we use the expression ``$(a,b,\dots)$-bounded" to mean that a quantity is finite and bounded by a certain number depending only on the parameters $a,b,\dots$. 

Comparing Neumann's theorem with Theorem \ref{gla} one cannot help but wondering whether these results are parts of some more general phenomenon. The purpose of the present article is to address this question. 

Given a group-word $w=w(x_1,\dots,x_k)$, the verbal subgroup $w(G)$ of a group $G$ determined by $w$ is the subgroup generated by the set $G_w$ consisting of all values $w(g_1,\ldots,g_k)$, where $g_1,\ldots,g_k$ are elements of $G$. In particular, we will work with multilinear commutator words. These are words which are obtained by nesting commutators, but using always different variables. More formally, the word $w(x)=x$ in one variable is a multilinear commutator; if $u$ and $v$ are  multilinear commutators involving different variables then the word $w=[u,v]$ is a multilinear commutator, and all multilinear commutators are obtained in this way. Note that in the literature the multilinear commutators are sometimes called outer commutator words. In the recent article by A. Shalev \cite{shalev} they are called general commutator words. Examples of multilinear commutators include the familiar lower central words $\gamma_k(x_1,\dots,x_k)=[x_1,\dots,x_k]$ and derived words $\delta_k$,  
on $2^k$ variables, defined recursively by
$$\delta_0=x_1,\qquad \delta_k=[\delta_{k-1}(x_1,\ldots,x_{2^{k-1}}),\delta_{k-1}(x_{2^{k-1}+1},\ldots,x_{2^k})].$$
 Of course, $\gamma_k(G)$ is the $k$-th  term of the lower central series of $G$ while $\delta_k(G)=G^{(k)}$ is the $k$-th  term of the derived series.

We establish the following results.

\begin{theorem}\label{aaa} Let $w=w(x_1,\dots,x_n)$ be a multilinear commutator, 
 and let $G$ be a group such that $|x^G|\leq m$ 
for every $w$-value $x$ in $G$. Then the commutator subgroup of $w(G)$ has finite $(m,n)$-bounded order.\end{theorem}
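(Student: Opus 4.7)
The plan is to reduce to the case $G=w(G)$, establish that $G$ is an FC-group, derive local bounds from the hypothesis, and finally globalize these using the multilinear commutator structure of $w$.

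First, I would replace $G$ by $w(G)$: since $|x^{w(G)}|\le |x^G|\le m$ for every $w$-value $x$, we may assume $G=w(G)$ is generated by the set $G_w$ of $w$-values, reducing the goal to showing that $|G'|$ is $(m,n)$-bounded. Any $g\in G$ is then a product of finitely many $w$-values $x_1,\dots,x_r$, so $C_G(g)\supseteq \bigcap_i C_G(x_i)$ has finite index (at most $m^r$) in $G$; hence $G$ is an FC-group and $G'$ is a periodic locally finite subgroup.

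Second, I would derive a local bound: for each $w$-value $x$, the intersection $C_G(x^G)=\bigcap_{y\in x^G}C_G(y)$ has index at most $m^{m}$ in $G$, and it centralizes the normal closure $\langle x^G\rangle$. Hence $\langle x^G\rangle/Z(\langle x^G\rangle)$ has $m$-bounded order, and Schur's theorem supplies an $m$-bounded bound on $|\langle x^G\rangle'|$. In other words, the normal subgroup generated by a single $G$-orbit of $w$-values always has a central subgroup of bounded index and a finite derived subgroup of bounded order.

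Third, I would globalize these local bounds by induction on the complexity of the multilinear commutator word $w$. The base case $w=x_1$ is precisely Neumann's classical BFC-theorem. For the inductive step write $w=[u,v]$ with $u,v$ multilinear commutators on disjoint sets of variables; every $w$-value has the form $[u',v']$ with $u'$ a $u$-value and $v'$ a $v$-value, and commutator identities together with the bound on $|[u',v']^G|$ should allow one to transfer information to $u$- and $v$-values inside suitable bounded-index subgroups or bounded quotients, so that the inductive hypothesis can be applied to $u(G)$ and $v(G)$ modulo a finite normal subgroup of bounded order.

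The main obstacle I expect is this last step, namely the transition from the local information about $\langle x^G\rangle$ for individual $w$-values to a single global bound on $|G'|$: commutators $[a,b]$ with $a\in\langle x_1^G\rangle$ and $b\in\langle x_2^G\rangle$ are not directly controlled by the local bounds, and a careful structural argument—likely involving a reduction to finite groups via the residual properties of FC-groups, a Dicman-type normal-closure argument to produce a finite characteristic subgroup modulo which the hypothesis survives, and perhaps an ultraproduct/compactness step to ensure the bound is uniform—will be required to combine them.
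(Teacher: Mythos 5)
Your first two steps are fine and correspond, in weaker form, to the paper's preliminaries: the observation that each $y\in x^G$ is again a $w$-value, so $C_G(x^G)$ has index at most $m^m$ and Schur's theorem bounds $|\langle x^G\rangle'|$, is essentially the content of Lemmas~\ref{2.3} and~\ref{2.3b}. But the third step, which you yourself flag as the main obstacle, is a genuine gap, and it is precisely where all the work lies. The induction on the recursive structure $w=[u,v]$ cannot get off the ground because the hypothesis does not descend to the subwords: knowing $|[u',v']^G|\le m$ for all $u$-values $u'$ and $v$-values $v'$ gives no control whatsoever on $|u'^G|$ or $|v'^G|$ (already for $w=[x_1,x_2]$, bounded classes of commutators say nothing about classes of arbitrary elements), so there is no way to ``apply the inductive hypothesis to $u(G)$ and $v(G)$''. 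The suggested fallback of a compactness/ultraproduct argument to make the bound uniform also presupposes that finiteness of $w(G)'$ is already known for each individual $G$, which is not established by steps 1--2; the FC-group theory you invoke only yields that $G'$ is periodic and locally finite, with no finiteness.

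The paper closes this gap by a quite different induction: not on the structure of $w$, but on the size of a subset $I\subseteq\{1,\dots,n\}$ of variable positions. The inductive statement (Theorem~\ref{together} via Proposition~\ref{inductive-step}) produces, for each $s$, a normal subgroup $M_s$ of bounded index and a normal subgroup $N_s$ of bounded order such that $[W,w_I(K;M_s)]\le N_s$ for all $|I|\le s$; at $s=n$ this bounds $[w(K),W]$. The engine is Lemma~\ref{basic-light}, a Wiegold-type argument: among all $w$-values obtained by perturbing a fixed tuple of coset representatives by elements of $M$, pick one, $a$, with the maximal number of conjugates in $W$; maximality forces $[W,va]\le[W,a]$ for suitable perturbations $v$, and $[W,a]^K$ is bounded by your step-2 type estimate. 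The multilinearity of $w$ then enters through Corollaries~\ref{uno2} and~\ref{M2}, which upgrade the vanishing of $\overline{w(d_1M,\dots,d_nM)}$ modulo the centre to the vanishing of $\overline{w_I(K;M_I)}$. None of this machinery appears in your outline, so the proposal as it stands does not constitute a proof.
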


\begin{theorem}\label{bbb} Let $w=w(x_1,\dots,x_n)$ be a multilinear 
commutator,
 and let $G$ be a group such that $|x^{w(G)}|\leq m$ 
for every  $w$-value $x$ in $G$. Then $[w(w(G)),w(G)]$ has finite $(m,n)$-bounded order.\end{theorem}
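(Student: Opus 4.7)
Set $H = w(G)$ and $K = w(w(G)) = w(H)$. Since every $w$-value in $H$ is already a $w$-value in $G$, the hypothesis supplies $|x^H|\leq m$ for every $w$-value $x \in H$. This is precisely the hypothesis of Theorem~\ref{aaa} applied to the group $H$ with word $w$, so we conclude that $|[K,K]|$ is $(m,n)$-bounded. Since $[K,K]$ is characteristic in $G$, we may factor it out without changing the hypothesis, so the problem reduces to bounding $|[K,H]|$ in the case where $K$ is abelian.

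Next I would consider the multilinear commutator word $w^*(x_1,\ldots,x_{n+1}) = [w(x_1,\ldots,x_n), x_{n+1}]$, for which $w^*(H) = [w(H), H] = [K,H]$ is precisely our target. Any $w^*$-value in $H$ has the form $[x,h] = x^{-1}\cdot x^h$, a product of two $w$-values in $H$ (since multilinear commutators are closed under conjugation and inversion), so its $H$-conjugacy class has size at most $m^2$. A direct application of Theorem~\ref{aaa} to $H$ with the word $w^*$ thus bounds $|(w^*(H))'| = |[K,H]'|$, but in our abelian reduction $[K,H]\leq K$ is already abelian, rendering this particular bound tautological. The $m^2$ estimate, however, should remain useful.

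The genuinely new step, and the main obstacle, is to bound $|[K,H]|$ itself. My plan is to exploit the verbal structure $K = w(H)$ together with the multilinearity of $w$. Working modulo $[K,H]$, the identity $w(xy, x_2, \ldots, x_n) \equiv w(x, x_2, \ldots, x_n)\, w(y, x_2, \ldots, x_n)$ (and its analogues in the remaining arguments) lets us rewrite every $w$-value of $H$ as a product of ``basic'' $w$-values $w(s_1,\ldots,s_n)$ with $s_j \in G_w$, using that $H$ is generated by $G_w$. Moreover $K/[K,H]$ is central in $H/[K,H]$, so any two $H$-conjugate basic $w$-values coincide modulo $[K,H]$. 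Combined with iterated applications of Theorem~\ref{aaa} to the words $[w^*, x_{n+2}]$, $[[w^*, x_{n+2}], x_{n+3}]$, and so on (each having a controllable $H$-conjugacy bound obtained by iterating the product decomposition), one hopes to bound successive terms of the descending series $[K,H] \geq [K, H, H] \geq \cdots$. The delicate point, and where genuinely new work beyond the BFC/Neumann framework is required, is to control the correction terms appearing in the multilinear expansion tightly enough that this scheme yields a uniform bound on $|[K,H]|$; this is precisely where the verbal nature of $K = w(H)$ (and not merely the fact that $K$ is generated by a normal set of elements with bounded conjugacy class in $H$) has to be used in an essential way.
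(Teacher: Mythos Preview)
Your proposal has a genuine gap. The opening reduction is fine: applying Theorem~\ref{aaa} to $H=w(G)$ does bound $|K'|$ with $K=w(H)$, and passing to $K$ abelian is harmless. But the target is $[K,H]$, and from that point on you do not actually prove anything. Applying Theorem~\ref{aaa} to $H$ with the words $w^*=[w,x_{n+1}]$, $[w^*,x_{n+2}]$, \dots\ only bounds the derived subgroups of $[K,H]$, $[K,H,H]$, \dots, all of which lie inside $K'$ and are therefore already trivial after your reduction; so the iteration yields no new information. Your multilinear-expansion plan and the descending series $[K,H]\ge[K,H,H]\ge\cdots$ are suggestive, but nothing in the proposal bounds either the length of that series or the sizes of its factors, and you yourself flag the ``delicate point'' as unresolved. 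In short, Theorem~\ref{aaa} controls commutators \emph{among} $w$-values, whereas bounding $[K,H]$ requires controlling commutators of $w$-values with \emph{arbitrary} elements of $H$; this is not a corollary of Theorem~\ref{aaa}.

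The paper sidesteps this obstacle by not reducing Theorem~\ref{bbb} to Theorem~\ref{aaa} at all. Instead it proves a common generalisation (Theorem~\ref{together}): for any subgroup $K$ with $w(G)\le K$ and $|a^K|\le m$ for all $a\in G_w$, the subgroup $[w(K),w(G)]$ has $(m,n)$-bounded order. Theorems~\ref{aaa} and~\ref{bbb} then drop out by taking $K=G$ and $K=w(G)$ respectively. The engine behind Theorem~\ref{together} is Lemma~\ref{basic-light} (a maximal-conjugacy-class argument producing cosets $\tilde k_iM$ with $[W,w(\tilde k_1M,\dots,\tilde k_nM)]$ small), combined with the multilinear-commutator combinatorics of Corollaries~\ref{uno2} and~\ref{M2}, organised as an induction over subsets $I\subseteq\{1,\dots,n\}$ (Proposition~\ref{inductive-step}). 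That inductive machinery is precisely what replaces the missing step in your plan.
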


It is easy to see that the theorem of Neumann can be obtained from each of the above theorems by considering the case where $w(x)=x$ while Theorem \ref{gla} follows by taking $w(x_1,x_2)=[x_1,x_2]$.

A number of further results of similar nature can be deduced from from rather general Theorem \ref{together} which states that for any subgroup $K$ of $G$ such that $w(G)\leq K$ and $|a^K|\leq m$ for each $a\in G_w$, the order of $[w(K),w(G)]$ is $(m,w)$-bounded (see Section 4 for details).  For example, let $w=[[x_1,x_2],[x_3,x_4]]$ be the metabelian word. If $|x^G|\leq m$ for each $w$-value in $G$, then $G^{(3)}$ has finite $m$-bounded order. If $|x^{G'}|\leq m$ for each $w$-value in $G$, then $\gamma_3(G^{(2)})$ has finite $m$-bounded order.

The proofs of Theorem \ref{aaa} and Theorem \ref{bbb} are based on rather specific combinatorial techniques which were developed in \cite{fernandez-morigi, DMS1,DMS-revised} for handling multilinear commutator words. 
It seems unlikely that results of similar nature for arbitrary words hold.

\section{Preliminary results}

Throughout this section, $w$ will be a fixed word. 

Let $G$ be a group generated by a set $X$ such that $X = X^{-1}$. Given
an element $g\in G$, we write $l_X(g)$ for the minimal number $l$ with the
property that $g$ can be written as a product of $l$ elements of $X$. Clearly, $l_X(g)=0$ if and only if $g = 1$. We call $l_X(g)$ the length of $g$ with respect to $X$. The following result is Lemma 2.1 in \cite{dieshu}.

\begin{lemma}\label{2.1g}  Let $H$ be a group generated by a set $X = X^{-1}$ and let $K$ be a subgroup of finite index $m$ in $H$. Then each coset $Kb$ contains an element $g$ such that $l_X(g)\le m-1$.
\end{lemma}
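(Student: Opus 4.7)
The plan is to work in the Schreier right coset graph $\Gamma$ of $K$ in $H$ with respect to $X$. The vertex set of $\Gamma$ consists of the $m$ right cosets $Kh$, and for each generator $x \in X$ one draws a directed edge from $Kh$ to $Khx$. Because $X = X^{-1}$, every $x$-edge from $Kh$ to $Khx$ is matched by an $x^{-1}$-edge in the opposite direction, so $\Gamma$ is effectively undirected; and since $X$ generates $H$, the graph $\Gamma$ is connected.

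The key observation is that a walk of length $\ell$ in $\Gamma$ starting at the coset $K$ and labelled by $x_1, x_2, \ldots, x_\ell \in X$ terminates at the coset $K \cdot x_1 x_2 \cdots x_\ell$. Hence, to produce an element $g \in Kb$ with $l_X(g) \le m-1$, it suffices to exhibit a walk in $\Gamma$ of length at most $m-1$ from the vertex $K$ to the vertex $Kb$, and then to set $g := x_1 \cdots x_\ell$.

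Since $\Gamma$ is connected with $m$ vertices, a standard breadth-first construction produces a spanning tree rooted at $K$ with at most $m-1$ edges: start with $\{K\}$; while some vertex is still unreached, use connectedness to pick an already-reached coset $C$ and a generator $x \in X$ such that $Cx$ is new, and add the edge $C \to Cx$. After at most $m-1$ such steps every coset is in the tree, and every vertex lies within graph-distance $m-1$ of the root. Reading off the labels along the unique path from $K$ to $Kb$ gives the desired word of length at most $m-1$.

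No real obstacle arises: the argument boils down to the elementary fact that a connected graph on $m$ vertices has diameter at most $m-1$, together with the translation between $X$-words in $H$ and walks in the Schreier graph, which uses only that $X = X^{-1}$ generates $H$.
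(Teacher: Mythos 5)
Your proof is correct. The paper itself imports this lemma from \cite{dieshu} without proof, and the argument there is precisely the one you give in graph-free language: the set of cosets reached by $X$-words of length at most $i$ grows strictly with $i$ until it exhausts all $m$ cosets, which is your observation that the Schreier coset graph is connected on $m$ vertices and hence every vertex lies within distance $m-1$ of the root.
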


In the sequel the above lemma will be used in the situation where $H=w(G)$ and $X=G_w\cup G_w^{-1}$ is the set of $w$-values and their inverses in $G$. Therefore we will write $l(g)$ to denote the smallest number such that the element $g\in w(G)$ can be written as a product of as many $w$-values or their inverses.

Recall that if $G$ is a group, $a\in G$ and $H$ is a subgroup of $G$, then $[H,a]$ denotes the subgroup of $G$ generated by all commutators of the form $[h,a]$, where $h\in H$. It is well-known that $[H, a]$ is normalized by $a$ and $H$.

The following result is analogous to Lemma 2.3 in \cite{dieshu}.  

\begin{lemma} \label{2.3} 
Let $w$ be a word and let $G$  be a group such that for every $x\in G_w$ the centralizer $C_{w(G)}(x)$ of $x$ in $w(G)$ has finite index at most $m$ in $w(G)$. Then  $[w(G),x]$ has $m$-bounded order for every $x\in G_w$.
\end{lemma}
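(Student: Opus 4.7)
My plan is to reduce the statement to a direct application of Schur's theorem by identifying $[w(G),x]$ with the derived subgroup of a small auxiliary group. Fix $x \in G_w$ and let $N = \langle x^{w(G)} \rangle$ be the normal closure of $x$ in $w(G)$. By hypothesis $[w(G):C_{w(G)}(x)] \le m$, so the conjugacy class $Y = x^{w(G)}$ has at most $m$ elements and $N = \langle Y\rangle$ is generated by this finite set. Note that $N$ is normal in $w(G)$ and that $[w(G),x]$ is normal in $w(G)$.

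The heart of the argument is the identification $[w(G),x] = N'$. For the inclusion $[w(G),x] \subseteq N'$, observe that each generator $[h,x] = (x^h)^{-1}x$ of $[w(G),x]$ lies in $N$; since $x^h$ and $x$ are conjugate in $N$ and therefore coincide in the abelian quotient $N/N'$, we get $[h,x] = (x^h)^{-1}x \in N'$. Conversely, for any $h \in w(G)$ the identity $x^h = x \cdot [h,x]^{-1}$ shows that $y \equiv x \pmod{[w(G),x]}$ for every generator $y \in Y$ of $N$. Since $[w(G),x]$ is normal in $N$, the quotient $N/[w(G),x]$ is thus generated by a single element, hence cyclic and in particular abelian, giving $N' \subseteq [w(G),x]$.

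It then remains to bound $|N'|$. For $n \in N \subseteq w(G)$ and $y = x^h \in Y$, the conjugate $y^n = x^{hn}$ again lies in $Y$, so conjugation by $N$ permutes the generating set $Y$. Consequently $[N:C_N(y)] \le |Y| \le m$ for each $y \in Y$, and since $Z(N)$ contains the intersection of the at most $m$ centralizers $C_N(y)$, we obtain $[N:Z(N)] \le m^m$. Schur's theorem now bounds $|N'|$ in terms of $[N:Z(N)]$, yielding an $m$-bounded order, and via the identification $[w(G),x] = N'$ this is exactly what we want.

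The only step I expect to require any thought is spotting the equality $[w(G),x] = N'$; once this is in hand, everything reduces to counting orbits on $Y$ and quoting Schur. In particular, Lemma \ref{2.1g} does not appear to be needed for this lemma, although it will presumably play a role in the more delicate later arguments where the hypothesis only controls centralizers of $w$-values rather than of arbitrary elements.
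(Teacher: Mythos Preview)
Your argument has a genuine gap in the claimed inclusion $[w(G),x]\subseteq N'$. You assert that $x^h$ and $x$ are conjugate \emph{in $N$}, but they are only conjugate in $W=w(G)$ via $h$, and $h$ need not lie in $N$; there is no reason this $W$-conjugacy descends to an $N$-conjugacy. Concretely, take $w(x)=x$, $G=W=S_3$ and $x=(1\,2\,3)$: then $Y=x^{W}=\{(1\,2\,3),(1\,3\,2)\}$, $N=\langle Y\rangle=A_3$ is abelian so $N'=1$, whereas $[W,x]=A_3$. Only the inclusion $N'\subseteq[W,x]$ (which you prove correctly) survives.

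The problem is structural, not cosmetic. Your proof uses nothing beyond the single bound $|x^{W}|\le m$; the hypothesis on other $w$-values is never invoked. But that bound alone cannot control $|[W,x]|$: in $W=C_p\rtimes C_2$ with the involution acting by inversion ($p$ odd), a generator $x$ of $C_p$ has $|x^{W}|=2$, yet $[W,x]=\langle x^{2}\rangle=C_p$ has order $p$. Hence no argument that reads only $|x^{W}|$ can work. The paper's proof is accordingly different: it uses Lemma~\ref{2.1g} to choose coset representatives $y_1,\dots,y_m$ of $C_W(x)$ that are short products of $w$-values $y_{ij}$, and then applies the centralizer hypothesis to \emph{all} of the $y_{ij}$ (and their conjugates), not just to $x$, to produce a finitely generated subgroup $T$ with $[W,x]\le T'$ and $[T:Z(T)]$ bounded; Schur then finishes. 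So your closing remark that Lemma~\ref{2.1g} is unneeded here is also mistaken---it is exactly the device that brings the full hypothesis into play.
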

\begin{proof} Let $W=w(G)$. Since $C_W (x)$ has index at most $m$ in $W$, by Lemma \ref{2.1g} we can choose elements $y_1,\dots,y_m$ such that $l(y_i)\le m-1$ and $[W,x]$ is generated by the commutators $[y_{i},x]$. For each
$i=1,\dots,m$ write $y_i=y_{i\,1}\cdots y_{i\,{m-1}}$, where $y_{i\,j}\in G_w\cup G_w^{-1}$. The standard commutator identities show that $[y_i,x]$ can be written as a product of conjugates in $W$ of the commutators $[y_{ij} ,x]$. Let $h_1,\dots,h_s$ be the conjugates in $W$ of elements from the set $\{x, y_{ij}; 1\le i, j \le m\}$. Since $C_W (h)$ has finite index at most $m$ in $W$ for each $h\in G_w$, it follows that $s$ is $m$-bounded. Let $ T = \langle h_1 , \dots, h_s\rangle$. It is clear that $[W, x]\le T'$ and
so it is sufficient to show that $T'$ has finite $m$-bounded order. 
 As $|W:C_{W} (h_i)| \le m$ for every $i$, 
 it follows that the center $Z(T )$ has index at most $m^s$ in $T$. Thus,
Schur's theorem \cite[10.1.4]{robinson} tells us that $T'$ has finite $m$-bounded order,
as required.
\end{proof}

\begin{lemma}\label{2.3b} 
 Let $w$ be a word, let $G$ be a 
 group and let $K$ be a subgroup of $G$ containing $w(G)$ such that $|K:C_K(x)|\le m$ for every $x\in G_w$. Then $[w(G),x]^K$ has $m$-bounded order for every  $x\in G_w$.
\end{lemma}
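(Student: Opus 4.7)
The plan is to adapt the argument of Lemma \ref{2.3}, replacing $W$-conjugation with $K$-conjugation at every step and using the centralizer condition in $K$ to keep the number of relevant conjugates bounded. Set $W=w(G)$ and note first that for every $x\in G_w$ the inequality
\[
|W:C_W(x)|\;=\;|W:W\cap C_K(x)|\;\le\;|K:C_K(x)|\;\le\;m
\]
holds, so Lemma \ref{2.1g} applies inside $W$: there exist elements $y_1,\dots,y_m\in W$ with $l(y_i)\le m-1$ such that the commutators $[y_i,x]$ generate $[W,x]$.

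Next I would write each $y_i$ as a product $y_i=y_{i\,1}\cdots y_{i\,m-1}$ with $y_{i\,j}\in G_w\cup G_w^{-1}$, and use the standard commutator identities exactly as in Lemma \ref{2.3} to rewrite $[y_i,x]$ as a product of $W$-conjugates of the $[y_{i\,j},x]$. The key change now is to introduce all $K$-conjugates (not only $W$-conjugates) of the finitely many elements $x$ and $y_{i\,j}$: let $h_1,\dots,h_s$ be the full list of these $K$-conjugates, and put
\[
T\;=\;\langle h_1,\dots,h_s\rangle.
\]
Because $|K:C_K(h)|\le m$ for every $h$ that is a $w$-value or its inverse, the number $s$ of such conjugates is $m$-bounded. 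By construction $T$ is closed under $K$-conjugation and hence is normal in $K$, and $[W,x]\le T'$ since each generator of $[W,x]$ lies in $T'$.

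Finally, since $T'$ is normal in $K$ and contains $[W,x]$, it also contains $[W,x]^K$. To bound $|T'|$ I would invoke Schur's theorem: from $|T:C_T(h_i)|\le |K:C_K(h_i)|\le m$ for every $i$ and $Z(T)=\bigcap_{i=1}^s C_T(h_i)$, one gets $|T:Z(T)|\le m^s$, so that $|T'|$, and therefore $|[W,x]^K|$, is $(m,s)$-bounded, hence $m$-bounded. I do not foresee a real obstacle here; the only point requiring care is the passage from $W$-conjugates to $K$-conjugates when collecting the generating set, and the verification that normality of $T$ in $K$ guarantees $[W,x]^K\le T'$ rather than merely $[W,x]\le T'$.
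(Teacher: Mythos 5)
Your proof is correct, but it takes a genuinely different route from the paper's. The paper simply quotes Lemma \ref{2.3} to conclude that $[W,x]$ itself has $m$-bounded order, and then observes that, since $W\trianglelefteq K$ and $x$ has at most $m$ conjugates in $K$, the subgroup $[W,x]^K$ is the product of the at most $m$ conjugates $[W,x]^k=[W,x^k]$; these all lie in $W$ and each is normalized by $W$, so they normalize one another and the product is a subgroup of $m$-bounded order. You instead rerun the internal argument of Lemma \ref{2.3}, enlarging the generating set of $T$ from the $W$-conjugates to all $K$-conjugates of $x$ and the $y_{i\,j}$; this makes $T$, and hence $T'$, normal in $K$, so $T'$ absorbs $[W,x]^K$ in one stroke, and Schur's theorem bounds $|T'|$ exactly as before. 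All the steps check out: the reduction $|W:C_W(x)|\le|K:C_K(x)|\le m$ legitimizes the use of Lemma \ref{2.1g} inside $W$; the $h_i$ are $K$-conjugates of $w$-values or their inverses, hence again $w$-values or inverses thereof, so $|T:C_T(h_i)|\le m$ and $s$ is $m$-bounded; and normality of $T'$ in $K$ gives $[W,x]^K\le T'$. The paper's version is shorter and modular, reusing Lemma \ref{2.3} as a black box; yours is a single self-contained application of Schur's theorem that avoids the ``product of mutually normalizing subgroups'' step, at the cost of repeating the construction. Either is acceptable.
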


\begin{proof} Choose $x\in G_w$ and let $W=w(G)$. By Lemma \ref{2.3}, $[W,x]$ has $m$-bounded order. Observe that $[W,x]$ has at most $m$ conjugates in $K$ and the conjugates normalize each other. Thus, $[W,x]^K$ is a product of at most $m$ subgroups that normalize each other and have $m$-bounded order. The lemma follows.
\end{proof}

The following lemma can be seen as a development related to Lemma 2.4 in \cite{dieshu} and Lemma 4.5 in \cite{wie}. It plays a central role in our arguments.  

\begin{lemma} \label{basic-light}  Let $m,j$ be positive integers and $w=w(x_1,\dots ,x_n)$ a word. Let $G$ be a group and $K$ be a subgroup of $G$ 
containing $w(G)$ such that $|K:C_K(x)|\le m$ for every $x\in G_w$. Assume that $M$ is a normal subgroup of $K$ of index $j$. 
Choose $k_i\in K$ for $i=1,\dots,n$. Then there exist elements $\tilde k_i\in k_i M$, for $i=1,\dots,n$, and a normal subgroup $\tilde M$ of $M$ 
of finite $(j,m)$-bounded index, such that the order of $[w(G),w(\tilde k_1 \tilde M ,\dots,\tilde k_n \tilde M)]^K$ is finite and $m$-bounded.
\end{lemma}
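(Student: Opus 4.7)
My plan is to expand each $w$-value $y_{\tilde m} := w(\tilde k_1 \tilde m_1, \dots, \tilde k_n \tilde m_n)$ via a free-group identity, then use Lemma~\ref{2.3b} to reduce the desired bound to controlling a single residual commutator. Concretely, in the free group of rank $2n$, the element $w(x_1 y_1, \dots, x_n y_n) w(x_1, \dots, x_n)^{-1}$ becomes trivial under $y_i \mapsto 1$ and so lies in the normal closure of $\{y_1, \dots, y_n\}$; by induction on the length of $w$, using $[ab,c]=[a,c]^b[b,c]$ and $[a,bc]=[a,c][a,b]^c$, it can be written as a product of a bounded number (depending only on $w$) of conjugates of the $y_i^{\pm 1}$. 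Specialising $x_i \mapsto \tilde k_i$ and $y_i \mapsto \tilde m_i \in M$, and using $M \trianglelefteq K$, we obtain $y_{\tilde m} = x_0 \cdot Q$ with $x_0 := w(\tilde k_1, \dots, \tilde k_n) \in G_w$ and $Q \in M$.

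By Lemma~\ref{2.3b} applied to $x_0$, the subgroup $W_0 := [w(G), x_0]^K$ has $m$-bounded order. The identity $[a,bc] = [a,c][a,b]^c$ gives $[w(G), y_{\tilde m}]^K \leq [w(G), Q]^K \cdot W_0$, so
\[
[w(G), w(\tilde k_1 \tilde M, \dots, \tilde k_n \tilde M)]^K \;\leq\; \bigl\langle [w(G), Q]^K : \tilde m \in \tilde M^n \bigr\rangle \cdot W_0,
\]
and it suffices to choose $\tilde k_i \in k_i M$ and $\tilde M \trianglelefteq M$ of $(j,m)$-bounded index so that $[w(G), Q]^K$ is uniformly bounded by a function of $m$.

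For this last step I would look for a $K$-invariant subgroup $Z \leq w(G)$ of $m$-bounded order such that the kernel of the $K$-action on $w(G)/Z$, namely $N := C_K(w(G)/Z)$, has $(j,m)$-bounded index in $K$, and then set $\tilde M := M \cap N$. Since $N \trianglelefteq K$, every conjugate $\tilde m_i^u$ appearing in $Q$ (with $\tilde m_i \in \tilde M \leq N$, $u \in K$) lies in $N$, forcing $Q \in N$ and hence $[w(G), Q] \leq Z$. The main obstacle is the construction of $Z$: the plan is to take $Z$ to be the product of the $m$-bounded subgroups $[w(G), x]^K$ over a carefully chosen bounded family $\mathcal{F}$ of $w$-values, built from perturbations of $(k_1, \dots, k_n)$ by a bounded transversal of $M$ in $K$, so that the intersection $\bigcap_{x \in \mathcal{F}} C_K(x)$ lies inside $N$. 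Balancing the size of $\mathcal{F}$ (which governs $|Z|$) against the index of this centralizer intersection (which governs $[K:N]$) is the combinatorial crux of the proof and draws on the techniques of \cite{fernandez-morigi, DMS1, DMS-revised}.
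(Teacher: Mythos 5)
There is a genuine gap, and it sits exactly where you yourself locate the ``combinatorial crux.'' Your reduction $y_{\tilde m}=x_0Q$ with $Q\in M$ and $[w(G),y_{\tilde m}]^K\le [w(G),Q]^K\cdot W_0$ is fine, but the remaining task --- producing a normal subgroup $Z$ of $m$-bounded order with $[w(G),N]\le Z$ for some $N$ of $(j,m)$-bounded index in $K$ --- is essentially the theorem you are trying to prove, not a step towards it. Test it on the base case $w(x)=x$, $K=G=w(G)$, $M=K$: there the hypothesis just says $G$ is a BFC-group, and your $Z$ would be a bounded-order normal subgroup with $C_G(G/Z)$ of bounded index, i.e.\ (up to constants) Neumann's theorem itself. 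Your proposed construction of $Z$ does not deliver this: if $Z=\prod_{x\in\mathcal F}[w(G),x]^K$ and $u\in\bigcap_{x\in\mathcal F}C_K(x)$, then $u$ commutes with the finitely many $w$-values in $\mathcal F$, but nothing forces $[g,u]\in Z$ for the (possibly infinitely many) other $w$-values $g$ generating $w(G)$; for a single $g$ the set $\{u:[g,u]\in Z\}$ has index at most $m$, but the intersection over all $g\in G_w$ need not have bounded index (already $Z=1$ gives $N=Z(K)\cap\dots$, which can have infinite index in a BFC-group). The references \cite{fernandez-morigi,DMS1,DMS-revised} you invoke concern the combinatorics of multilinear commutators and contain no such counting argument.

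The missing idea is the Neumann--Wiegold maximality trick, and it explains why the statement lets you replace $k_i$ by $\tilde k_i\in k_iM$ --- a freedom your argument never uses. The paper chooses $a=w(\tilde k_1,\dots,\tilde k_n)$ so that $|a^W|$ is \emph{maximal} among all $w(k_1u_1,\dots,k_nu_n)$ with $u_i\in M$, writes $a^W=\{a^{b_1},\dots,a^{b_r}\}$ with each $b_i$ a product of at most $m-1$ $w$-values, and takes $\tilde M$ inside the normal core of $C_K(\langle b_1,\dots,b_r\rangle)$ (which has $m$-bounded index by the hypothesis on centralizers of $w$-values). Then for $v\in\tilde M$ the elements $va^{b_i}$ are $r$ distinct conjugates of $va\in S$, hence \emph{all} of them by maximality; comparing $(va)^h=va^b$ with $v^ha^h$ yields $[h,v]^a\in[W,a]$ and so $[W,va]\le[W,a]$, after which Lemma~\ref{2.3b} finishes. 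In other words, one never bounds $[W,v]$ for $v\in\tilde M$ in isolation (which is impossible in general); one only bounds it after conjugation by the extremal element $a$. Without this counting step your proof cannot be completed along the lines you sketch.
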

\begin{proof}
Let $W=w(G)$. 
Consider the set 
$$S=\{w( k_1u_1,\dots,k_nu_n)|u_1,\dots,u_n\in M \}.$$ 
Choose in $S$ an element  $a=w(\tilde k_1,\dots,\tilde k_n)$ such that the number of conjugates of $a$ in $W$ is maximal among the elements of $S$, that is  
$|a^W|\ge |g^W|$ for all $g\in S$.
 
 By Lemma \ref{2.1g}  we can choose $b_1,\dots, b_r\in W$ such that $l(b_i) \le m-1$ and $a^W = \{a^{b_i} | i = 1, \dots, r\}$. Set $\tilde M  =M\cap ( C_K ( \langle b_1 ,\dots, b_r\rangle ))_K$ (i.e. $\tilde M$ is the intersection of $M$ and all $K$-conjugates of $ C_K (\langle b_1 ,\dots, b_r \rangle)$). Since $l(b_i) \le m-1$ and $C_K(x)$ has index at most $m$ in $K$ for each  $x \in G_w$, the subgroup  $C_K(\langle b_1, \dots , b_r\rangle)$ has $m$-bounded index in $K$, and so $\tilde M$ has $(j,m)$-bounded index. 

Consider the element  $w(\tilde k_1v_1,\dots,\tilde k_nv_n)\in S$ where each  $v_i\in \tilde M$, for $i=1,\dots,n$.  we have 
$$w(\tilde k_1v_1,\dots,\tilde k_nv_n)=va,$$
 for some $v\in \tilde M\le  C_K ( b_1 ,\dots, b_r )$. 
It follows that $(va)^{b_i} = va^{b_i}$ for each $i =1,\dots, r$. Therefore the elements $va^{b_i}$ form the conjugacy class $(va)^W$ because they are all different and their number is the allowed maximum. So, for an arbitrary element $h\in W$ there exists $b\in\{b_1 ,\dots, b_r\}$ such that
$(va)^h= va^b$ and hence $v^h a^h = va^b$. Therefore $[h, v] = v^{-h}v=a^h a^{-b}$ and so $[h, v]^a =a^{-1} a^h a^{-b} a = [a,h][b,a] \in [W,a].$
 Thus $[W,v]^a \le [W,a]$ and $$[W, va]=[W,a] [W,v]^a \le [W, a].$$ 
Therefore $[W,w(\tilde k_1 \tilde M ,\dots,\tilde k_n \tilde M)]\le [W,a]$.
Lemma \ref{2.3b} states that $[W,a]^K$ has $m$-bounded order. The result follows.
\end{proof}

\section{Combinatorics of multilinear commutators}

We will need some machinery concerning combinatorics of commutators, so we now recall some notation from the paper \cite{DMS-revised}.
 
 Throughout this section $w=w(x_1, \dots, x_n)$ is a multilinear commutator. 
If $A_1,\dots,A_n$ are subsets of a group $G$, we write
 $w(A_1, \dots , A_n)$ for the subgroup generated by the  $w$-values $w(a_1,\dots,a_n)$ with $a_i\in A_i$.  
  
Let $I$ be a subset of $\{1,\dots,n\}$. 
  Suppose that we have a family $A_{i_1}, \dots , A_{i_s}$ of subsets of $G$ with indices running  over $I$ and another family 
  $B_{l_1}, \dots , B_{l_t}$ of subsets with indices  running  over $\{1, \dots ,n \} \setminus I.$ 
 We write 
 $$w_I(A_i ; B_l)$$ 
 for $w(X_1, \dots , X_n)$, where $X_k=A_k$ if $k \in I$, and $X_k=B_k$  otherwise. 
 On the other hand, whenever $a_i\in A_i$ for $i\in I$ and $b_l\in B_l$ for $l\in \{1,\dots,n\}\setminus I$, the symbol 
 $w_I(a_i;b_l)$ stands for the element $w(x_1, \dots , x_n)$, where $x_k=a_k$ if $k \in I$, and $x_k=b_k$ otherwise.
  Sometimes, for the sake of shortness, we will omit the indices and we will simply write
  $$w_I(A ; B),$$
 for $w(X_1, \dots , X_n)$, where $X_k=A$ if $k \in I$, and $X_k=B$  otherwise. 
 
 The next lemma is Lemma 2.4 in \cite{DMS-revised}.

\begin{lemma}\label{2.1-conjugates} 
Let $w=w(x_1, \dots, x_n)$ be a multilinear commutator. 
Assume that  $H$ is a normal subgroup of a group $G$. 
Let $ g_1, \dots , g_n \in G$, $h \in H$  and fix $s \in \{1, \dots, n\}$. 
Then there exist 
 $y_j \in g_j^H$,  for  $j=1,\dots, n$, such that 
 \begin{eqnarray*}
 w_{\{s\}}(g_sh; g_l)=w(y_1, \dots,y_n) w_{\{s\}}(h; g_l). 
\end{eqnarray*}
\end{lemma}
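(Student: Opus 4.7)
The plan is to induct on the nesting structure (equivalently, the depth) of the multilinear commutator $w$. The base case $w(x_1)=x_1$ is immediate: here $n=s=1$ and $g_1h=g_1\cdot h$, so take $y_1=g_1\in g_1^H$. For the inductive step, write $w=[u,v]$ with $u$ and $v$ multilinear commutators on disjoint variable sets $I_u,I_v$ partitioning $\{1,\dots,n\}$, and proceed via the identities $[ab,c]=[a,c]^b[b,c]$ and $[a,bc]=[a,c][a,b]^c$. Two simple observations will be used throughout: any value of the form $u_{\{s\}}(h;g_j)$ or $v_{\{s\}}(h;g_j)$ lies in $H$ (as $H$ is normal and one of its entries is $h$), and conjugation by any element of $H$ sends each $g_j^H$ to itself, so applying such a conjugation inside a $w$-value produces another $w$-value whose $j$-th entry is still in $g_j^H$.

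In the case $s\in I_u$ the induction hypothesis applied to $u$ yields $u_{\{s\}}(g_sh;g_j)=A\cdot B$ with $A=u(y^u)$ a $u$-value on entries $y_j^u\in g_j^H$ and $B=u_{\{s\}}(h;g_j)\in H$. Setting $C=v(g_j:j\in I_v)$, the $[ab,c]$ identity gives
\[
w_{\{s\}}(g_sh;g_l)=[AB,C]=[A,C]^B\cdot [B,C].
\]
The right-hand factor equals $w_{\{s\}}(h;g_l)$. The left factor $[A,C]$ is a $w$-value whose $j$-th entry is $y_j^u$ (for $j\in I_u$) or $g_j$ (for $j\in I_v$), both in $g_j^H$; since $B\in H$, the conjugation by $B$ preserves each class, and the desired factorization appears directly.

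In the case $s\in I_v$, induction on $v$ gives $v_{\{s\}}(g_sh;g_j)=C\cdot D$ with $C=v(y^v)$, $y_j^v\in g_j^H$, and $D=v_{\{s\}}(h;g_j)\in H$. With $A=u(g_j:j\in I_u)$, the $[a,bc]$ identity produces
\[
w_{\{s\}}(g_sh;g_l)=[A,CD]=[A,D]\cdot[A,C]^D=w_{\{s\}}(h;g_l)\cdot[A,C]^D.
\]
The main obstacle is now visible: the two factors are present but in the wrong order. This is fixed by the simple reordering trick $e\cdot x=x^{e^{-1}}\cdot e$; writing $e=w_{\{s\}}(h;g_l)\in H$ gives $e\cdot[A,C]^D=[A,C]^{De^{-1}}\cdot e$. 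Since $D,e\in H$ one has $De^{-1}\in H$, so $[A,C]^{De^{-1}}$ is a $w$-value whose $j$-th entry still lies in $g_j^H$. This delivers the required form $w_{\{s\}}(g_sh;g_l)=w(y)\cdot w_{\{s\}}(h;g_l)$ and closes the induction. The only substantive step is the reordering in Case 2; everything else is mechanical expansion together with the closure of each $g_j^H$ under $H$-conjugation.
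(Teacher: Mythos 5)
Your proof is correct. Note that the paper itself does not prove this lemma but simply cites it as Lemma 2.4 of the reference [DMS-revised]; your structural induction on $w=[u,v]$ using the identities $[xy,z]=[x,z]^y[y,z]$ and $[x,yz]=[x,z][x,y]^z$ is the natural self-contained argument, and it is complete. The two supporting observations you rely on are both sound (a multilinear commutator value with one entry in the normal subgroup $H$ lies in $H$, and $H$-conjugation preserves each class $g_j^H$), and the only delicate point --- the factors appearing in the wrong order in the case $s\in I_v$ --- is correctly repaired by the conjugation trick $e\cdot x = x^{e^{-1}}\cdot e$, which keeps the left-hand factor a $w$-value with entries in the required conjugacy classes because $De^{-1}\in H$.
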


The following corollaries are special
 cases of Lemma 2.5  and Lemma 4.1 of \cite{DMS-revised}, respectively. For the reader's convenience, we include the proofs. 

\begin{corollary}\label{uno2}
Let $w=w(x_1, \dots, x_n)$ be a multilinear commutator. Let $K$ be a group and $M$ a  normal subgroup of $K$. Assume that $$w( k_1M,\dots,k_nM)=1$$ for some elements $k_i\in K$. Let $I$ be a proper subset of $\{1,\dots,n\}$. 
Then 
$$w_I(k_iM;M )=1.$$
\end{corollary}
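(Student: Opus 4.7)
The plan is to argue by downward induction on $|I|$. The base case $|I|=n$ (i.e., $I=\{1,\dots,n\}$) is precisely the hypothesis $w(k_1M,\dots,k_nM)=1$; since the claim concerns proper $I$, the induction simply descends from this base. For the inductive step, fix a proper subset $I$ of $\{1,\dots,n\}$, pick $s\notin I$, and set $J=I\cup\{s\}$, so that the inductive hypothesis yields $w_J(k_jM;M)=1$.

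To show $w_I(k_iM;M)=1$, I would verify that each generating $w$-value is trivial. Such a generator has the form $w(X_1,\dots,X_n)$ with $X_i=k_iu_i$ for $i\in I$ (where $u_i\in M$) and $X_l=v_l\in M$ for $l\notin I$. Writing $g_j=k_ju_j$ for $j\in I$ and $g_l=v_l$ for $l\notin J$, this generator equals $w_{\{s\}}(v_s;g_l)$. I would then apply Lemma \ref{2.1-conjugates} with $H=M$, at position $s$, taking $g_s=k_s$ and $h=v_s\in M$, which produces
$$w_{\{s\}}(k_sv_s;g_l)=w(y_1,\dots,y_n)\cdot w_{\{s\}}(v_s;g_l),$$
where $y_j\in g_j^M$. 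Because $M$ is normal in $K$ and each $u_j,v_l$ lies in $M$, one gets $y_s\in k_sM$, $y_j\in k_jM$ for $j\in I$, and $y_l\in M$ for $l\notin J$; hence both $w(y_1,\dots,y_n)$ and the left-hand side $w_{\{s\}}(k_sv_s;g_l)$ are generators of $w_J(k_jM;M)$, which is trivial by induction. Consequently $w_{\{s\}}(v_s;g_l)=1$, as required.

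I do not anticipate a genuine obstacle: the whole argument hinges on the fact that Lemma \ref{2.1-conjugates} conjugates by the normal subgroup $M$ itself, so the conjugates $y_j\in g_j^M$ automatically remain inside the prescribed cosets of $M$. The only point requiring some care is the coset bookkeeping, namely the inclusions $(k_ju_j)^M\subseteq k_jM$ for $j\in I$ and $v_l^M\subseteq M$ for $l\notin J$; both are immediate from $u_j,v_l\in M$ together with the normality of $M$ in $K$.
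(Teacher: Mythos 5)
Your proof is correct and follows essentially the same route as the paper: the paper proves the case $|I|=n-1$ by applying Lemma \ref{2.1-conjugates} at the single missing position and then ``repeatedly applies this case with $k_i=1$ if needed,'' which is exactly your downward induction with the one-index-removal step inlined. The coset bookkeeping you flag ($y_j\in g_j^M\subseteq k_jM$ for $j\in J$ and $y_l\in M$ for $l\notin J$, by normality of $M$) is the same observation the paper uses, so there is no gap.
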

\begin{proof}
It is sufficent to prove that the result holds when $|I|=n-1$, and then repeatedly apply this case, with $k_i=1$ if needed. 
Let $\{s\}=  \{1,\dots,n\} \setminus I$. Clearly $w_I(k_iM;M )=w_{\{s\}}(M; k_lM),$
 so we have to prove that $w_{\{s\}}(M; k_lM)=1.$

Choose an element $w_{\{s\}}(m_s; k_lm_l) \in w_{\{s\}}(M; k_lM)$ and consider the element $w_{\{s\}}(k_sm_s; k_lm_l)$. 
By Lemma \ref{2.1-conjugates}, 
\begin{eqnarray}\label{11}
 w_{\{s\}}(k_sm_s; k_lm_l)=w(y_1, \dots,y_n) w_{\{s\}}(m_s; k_lm_l),  
\end{eqnarray}
where $y_l \in (k_lm_l)^M \subseteq k_lM$,  for  $j \neq s$, and $y_s \in k_s^M \subseteq k_sM$. 
Both $w_{\{s\}}(k_sm_s; k_lm_l)$ and $w(y_1, \dots,y_n)$ are trivial by assumption, 
 since they belong to the subgroup $w( k_1M,\dots,k_nM)=1.$ 
By (\ref{11}), we conclude that $ w_{\{s\}}(m_s; k_lm_l)=1$ as well, and the result follows. 
\end{proof}

\begin{corollary}\label{M2} 
Let $w=w(x_1, \dots, x_n)$ be a multilinear commutator and let $I$ be a subset of $\{1, \dots ,n \}$.
Let $M$ be a normal subgroup of  a group $K$. 
 Assume that 
\[ w_J (K; M)=1 \quad \textrm{for every}\ J \subsetneq I.\]
Suppose we are given elements  $k_i \in K$ with $i \in I$ and  elements $m_j \in M$ with $j \in \{1, \dots, n\}$. 
 Then we have 
\[w_I(k_im_i; m_l)=w_I(k_i;m_l).\] 
\end{corollary}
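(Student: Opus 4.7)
My plan is to proceed by induction on $|I|$. The base case $|I|=0$ is immediate since both sides of the claimed identity reduce to $w(m_1,\ldots,m_n)$.

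For the inductive step ($|I|\geq 1$), I would pick any $s\in I$ and apply Lemma \ref{2.1-conjugates} at position $s$, taking $g_s=k_s$, $h=m_s$, and letting $g_j$ be the entry already at position $j\ne s$ of $w_I(k_im_i;m_l)$; that is, $g_j=k_jm_j$ for $j\in I\setminus\{s\}$ and $g_l=m_l$ for $l\notin I$. The lemma produces
\[
w_I(k_im_i;m_l) = w(y_1,\ldots,y_n)\cdot w_{\{s\}}(m_s;g_l),
\]
with $y_j\in g_j^M$. The second factor has position $s$ in $M$, every position in $I\setminus\{s\}$ in $K$, and every position outside $I$ in $M$, so it belongs to $w_{I\setminus\{s\}}(K;M)$, which is trivial by hypothesis. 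Hence $w_I(k_im_i;m_l)=w(y_1,\ldots,y_n)$, and since $M$ is normal each $y_j=g_j c_j$ for some $c_j\in M$.

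It remains to show $w(y_1,\ldots,y_n)=w_I(k_i;m_l)$. The plan is to iterate Lemma \ref{2.1-conjugates} at each position of $I$ in turn to absorb the $M$-corrections $c_j$ into the ``$h$''-parameter, observing that every residual that arises again has position in $M$ at the site of the current peel and $K$-entries at the remaining positions of $I$, so it belongs to some $w_J(K;M)$ with $J\subsetneq I$ and is therefore trivial. After finitely many such reductions the corrections at positions of $I$ are all eliminated and the expression becomes exactly $w_I(k_i;m_l)$.

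The main obstacle is precisely this iteration: a single application of Lemma \ref{2.1-conjugates} does not strictly lower the complexity of the expression, since $w(y_1,\ldots,y_n)$ has the same structural form as the original (a $K$-element times an $M$-correction at each position of $I$, an $M$-element at each position outside $I$). To make the iteration terminate at $w_I(k_i;m_l)$ rather than cycle through expressions of the same level, one must track cosets carefully and verify that the multilinear-commutator expansion encoded by Lemma \ref{2.1-conjugates} decomposes each stray correction into a product of elements supported on proper subsets $J\subsetneq I$, which the hypothesis then kills.
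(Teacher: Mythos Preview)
Your proposal has a genuine gap, and you yourself name it: after one application of Lemma~\ref{2.1-conjugates} with $g_s=k_s$ and $h=m_s$, the factor you discard, $w_{\{s\}}(m_s;g_l)$, indeed lies in $w_{I\setminus\{s\}}(K;M)=1$, but the factor you keep, $w(y_1,\ldots,y_n)$ with $y_s\in k_s^{M}$, still carries an $M$-perturbation at position $s$ (and at every other position of $I$). You are back to an expression of exactly the same shape, so the iteration never terminates. The appeal to ``tracking cosets carefully'' and to a further multilinear expansion is not an argument; nothing in what you wrote decreases any complexity measure. The induction on $|I|$ is also idle: for every $I'\subsetneq I$ the hypothesis already forces $w_{I'}(K;M)=1$, so the inductive hypothesis is vacuous.

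The missing idea is to apply Lemma~\ref{2.1-conjugates} with the roles reversed. Write $k_sm_s=hk_s$ with $h=k_sm_sk_s^{-1}\in M$, and invoke the lemma with $g_s=h$, with the lemma's ``$h$'' equal to $k_s$, and with the normal subgroup in the lemma taken to be $K$ itself. This gives
\[
w_{\{s\}}(hk_s;c_l)=w(y_1,\ldots,y_n)\,w_{\{s\}}(k_s;c_l),
\]
where now $y_s\in h^{K}\subseteq M$ and $y_l\in c_l^{K}\subseteq M$ for $l\notin I$, so it is the \emph{first} factor that lies in $w_{I\setminus\{s\}}(K;M)=1$. The surviving factor $w_{\{s\}}(k_s;c_l)$ has the $M$-correction at position $s$ completely removed while all other entries are unchanged. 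Repeating this once for each $s\in I$ strips the $m_i$'s off one at a time and yields $w_I(k_i;m_l)$ after $|I|$ steps, with no regress. This is precisely the paper's proof.
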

\begin{proof} 
Fix an index $s \in I$ and let $J=I\setminus \{s\}$. Then we can write 
\[w_I(k_im_i; m_l)=w_{\{s\}}(k_sm_s;c_l)\]
where $c_l=k_lm_l$ if $l\in J$, and $c_l=m_l$ if $l \notin I$. 
 Moreover we can write $k_sm_s= h k_s$ for some  $h \in M$.   By Lemma \ref{2.1-conjugates}, 
\begin{eqnarray}\label{12}
 w_{\{s\}}(h k_s; c_l)=w(y_1, \dots,y_n) w_{\{s\}}(k_s; c_l),  
\end{eqnarray}
 where $y_s \in h^K \subseteq M$
 and $y_l \in m_l^K \subseteq M$ if $l \notin I$. 
 In particular 
\[w(y_1, \dots,y_n) \in w_J (K; M).\] 
and thus $w(y_1, \dots,y_n)=1$,  since $w_J (K; M)=1$ by assumption. 
From  (\ref{12}) we deduce that
\[ w_{\{s\}}(k_s m_s; c_l)= w_{\{s\}}(h k_s; c_l)= w_{\{s\}}(k_s; c_l).\]
By repeating the argument for every $s\in I$, we get the desired conclusion.
\end{proof}

\section{Proof of the main theorems}

Our main theorems are both consequences of the following result, which will be proved in this section.

\begin{theorem}\label{together} Let $m$ be a positive integer and $w=w(x_1,\dots ,x_n)$ a multilinear commutator word. Suppose that $G$ is a group having a subgroup $K$ such that $W=w(G)\leq K$ and $|K:C_K(a)|\le m$ for every $a\in G_w$. Then $[w(K),W]$ has $(m,n)$-bounded order.\end{theorem}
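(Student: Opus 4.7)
The plan is to reduce the problem to producing a single normal subgroup $N$ of $K$ of $(m,n)$-bounded index that simultaneously controls commutators of $W$ with $w$-values on every coset tuple, and then close the argument with a generator-count. Concretely, suppose we can exhibit such $N\trianglelefteq K$ with the property that $[W,w(k_1N,\ldots,k_nN)]^K$ has $m$-bounded order for \emph{every} tuple $(k_1,\ldots,k_n)\in K^n$. Since $w(K)$ is generated by the elements $w(k_1,\ldots,k_n)$, and each such element lies in one of at most $|K/N|^n$ subgroups of the form $w(k_1N,\ldots,k_nN)$, the subgroup $[w(K),W]$ is generated by an $(m,n)$-bounded number of normal subgroups of $K$, each of $m$-bounded order, and so is itself of $(m,n)$-bounded order.

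To construct $N$, I would apply Lemma \ref{basic-light} iteratively. Starting with $M_0=K$, I define a descending chain $M_0\ge M_1\ge\cdots$ of normal subgroups of $K$: given $M_j$ of $(m,n)$-bounded index, let $M_{j+1}$ be the intersection of all the subgroups $\tilde M$ supplied by Lemma \ref{basic-light} when applied to each of the finitely many coset tuples in $(K/M_j)^n$ (with the role of $M$ played by $M_j$). Each individual application yields commutator control on a specific ``good'' sub-coset $(\tilde k_1\tilde M,\ldots,\tilde k_n\tilde M)$, and the intersection $M_{j+1}$ still has $(m,n)$-bounded index.

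The delicate point is that Lemma \ref{basic-light} controls $[W,w(\tilde k_1\tilde M,\ldots,\tilde k_n\tilde M)]^K$, not $[W,w(y_1 M_j,\ldots,y_nM_j)]^K$ for arbitrary $y_i\in k_iM_j$; in other words, the lemma handles only one sub-coset of each tuple in the refinement, not all of them. To close this gap I would invoke Corollary \ref{M2}: writing $y_i=\tilde k_im_i$ with $m_i\in M_j$, any $w$-value $w(\tilde k_1m_1,\ldots,\tilde k_nm_n)$ can be expressed in terms of $w(\tilde k_1,\ldots,\tilde k_n)$ modulo boundary expressions $w_I(\tilde k_i;m_l)$ with $I\subsetneq\{1,\ldots,n\}$. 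Each boundary expression involves strictly fewer ``outer'' variables $\tilde k_i$, so it can be handled by induction on $|I|$; the innermost case ($|I|=0$) reduces to controlling $w$-values with all arguments in $M_j$, which is where the hypothesis $|K:C_K(a)|\le m$ for $a\in G_w$ and Lemmas \ref{2.3}--\ref{2.3b} (giving a Neumann-type bound) apply. Iterating this refinement process, the chain $M_j$ must stabilize in an $(m,n)$-bounded number of steps, yielding the required $N$.

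The main obstacle will be the bookkeeping: verifying that the chain terminates in boundedly many refinements, that at each stage the hypothesis $w_J(K;M)=1$ required by Corollary \ref{M2} can be arranged to hold modulo the already-accumulated bounded subgroup, and that the inductive control of the boundary pieces $w_I(\tilde k_i;m_l)$ assembles cleanly into a uniform bound. The multilinearity of $w$ is what makes the decomposition tractable, but fitting the pieces together so that every coset tuple in $(K/N)^n$ is covered simultaneously is where care is required.
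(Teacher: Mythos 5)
You have assembled the right tools --- Lemma \ref{basic-light} applied to coset tuples, Corollary \ref{M2} to pass from the ``good'' representatives supplied by that lemma to arbitrary elements of the same coset, and the device of quotienting by the centralizer of $W$ modulo the accumulated finite normal subgroup so that containments become the triviality hypotheses those corollaries need. But the organizing induction is not right, and the proof as written does not close. The central unjustified step is the claim that the refinement chain $M_0\ge M_1\ge\cdots$ ``must stabilize in an $(m,n)$-bounded number of steps'': nothing in your construction forces stabilization, and no bound on the length of the chain is ever produced. The correct organization --- which you brush against when you say the boundary terms $w_I(\tilde k_i;m_l)$ ``can be handled by induction on $|I|$'' --- is to make $|I|$ the \emph{primary} induction parameter. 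The invariant to carry is: for each $s\le n$ there are a normal subgroup $N_s$ of $K$ of bounded order and a normal subgroup $M_s$ of $K$ of bounded index with $[W,w_I(K;M_s)]\le N_s$ for every $I$ with $|I|\le s$; here the coordinates in $I$ range over all of $K$, not over a fixed coset. The step from $s-1$ to $s$ applies Lemma \ref{basic-light} once per coset tuple indexed by $I$ (there are $|K/M_{s-1}|^{s}$ of them) and then uses Corollaries \ref{uno2} and \ref{M2}, whose hypothesis $w_J(K;M)=1$ for $J\subsetneq I$ is exactly the stage-$(s-1)$ invariant read in the quotient. This terminates after exactly $n$ steps by construction, and at $s=n$ one gets $[W,w(K)]\le N_n$ outright, so even your final generator count over the $|K/N|^{n}$ coset tuples becomes unnecessary.

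Two smaller points. First, your base case is not where ``Lemmas \ref{2.3}--\ref{2.3b} apply'': those lemmas bound $[W,x]$ for a \emph{single} $w$-value $x$, whereas $w(M_0,\dots,M_0)$ is generated by unboundedly many $w$-values, so their individual contributions cannot simply be accumulated; the uniform bound at $|I|=0$ already requires Lemma \ref{basic-light} (the maximal-conjugacy-class trick) together with Corollary \ref{uno2}. Second, when you invoke Corollary \ref{M2} you must write $y_i=\tilde k_i m_i$ with $m_i\in M_j$ (not $M_{j+1}$), so the hypothesis you need is $w_J(K;M_j)=1$ for proper $J$ --- a statement about the previous subgroup in the chain with fewer free coordinates. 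Once this is noticed, the ``refinement chain'' and the ``induction on $|I|$'' collapse into one and the same $n$-step induction; keeping them separate is precisely what leaves your stabilization claim hanging.
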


For the reader's convenience, the most technical part of our argument is isolated in the following proposition.

\begin{proposition}\label{inductive-step} 
Let $I\subseteq\{1,\dots,n\}$ with $|I|=s$. Under the hypotheses of Theorem \ref{together}, assume that there exist a normal subgroup $N$ of $K$ of
finite order $r$ and  
a normal subgroup $M$ of $K$ of 
 finite index $j$ such that
\[[W, w_J (K; M)]\le N \quad \textrm{for every}\ J \subsetneq I.\]
Then there exist a finite normal subgroup $N_I$ of $K$ of $(r,j,m,s)$-bounded order with
$N\le N_I$ and  
a normal subgroup $M_I$ of $K$ of $(j,m,s)$-bounded index with $M_I\le M$ such that
\[ [W,w_I (K; M_I)]\le N_I.\]
\end{proposition}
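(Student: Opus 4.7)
The plan is to construct $M_I$ and $N_I$ by applying Lemma~\ref{basic-light} to every choice of coset representatives at the $I$-positions, intersecting the resulting data, and using a ``modulo-$N$'' version of Corollary~\ref{M2} derived from the hypothesis to reduce arbitrary $I$-inputs to the representatives produced by basic-light.

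Fix a transversal $T$ of $M$ in $K$ (so $|T|=j$). For each tuple $\vec\alpha=(\alpha_i)_{i\in I}\in T^s$, apply Lemma~\ref{basic-light} with inputs $k_i=\alpha_i$ for $i\in I$ and $k_l=1$ for $l\notin I$. This produces elements $\tilde k_j^{\vec\alpha}\in K$ and a normal subgroup $\tilde M_{\vec\alpha}\trianglelefteq K$, $\tilde M_{\vec\alpha}\leq M$, of $(j,m)$-bounded index, with
\[A_{\vec\alpha} := [W,\,w(\tilde k_1^{\vec\alpha}\tilde M_{\vec\alpha},\dots,\tilde k_n^{\vec\alpha}\tilde M_{\vec\alpha})]^K\]
of $m$-bounded order. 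Since there are $j^s$ such tuples, $\tilde M:=\bigcap_{\vec\alpha}\tilde M_{\vec\alpha}$ has $(j,m,s)$-bounded index in $M$, and $N_I:=N\cdot\prod_{\vec\alpha}A_{\vec\alpha}$ has $(r,j,m,s)$-bounded order.

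The next ingredient is a \emph{modulo-$N$} analogue of Corollary~\ref{M2}: in that proof, the extra factor $w(y_1,\dots,y_n)$ produced by Lemma~\ref{2.1-conjugates} always lies in $w_J(K;M)$ for some proper subset $J\subsetneq I$, which by hypothesis centralizes $W$ modulo $N$. Using $[a,bc]=[a,c][a,b]^c$ and iterating over $s\in I$, one obtains
\[[W,\,w_I(k_im_i;\,m_l)]\ \equiv\ [W,\,w_I(k_i;\,m_l)]\pmod N\]
for every $k_i\in K$ and $m_j\in M$. In particular, writing $k_i=\tilde k_i^{\vec\alpha}\cdot((\tilde k_i^{\vec\alpha})^{-1}k_i)$ reduces the task of bounding $[W,w_I(k_i;m_l)]$ to that of bounding $[W,w_I(\tilde k_i^{\vec\alpha};m_l)]$ modulo $N$.

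The main obstacle is the remaining alignment step. Lemma~\ref{basic-light} controls $w$-values whose $l$-th input lies in the coset $\tilde k_l^{\vec\alpha}\tilde M_{\vec\alpha}$, but our chosen $m_l\in M_I\leq\tilde M$ lies in the trivial coset of $\tilde M_{\vec\alpha}$; these agree only when $\tilde k_l^{\vec\alpha}\in\tilde M_{\vec\alpha}$, which the maximality construction inside basic-light does not guarantee. To resolve this, I plan to run a second batch of Lemma~\ref{basic-light} applications in which the non-$I$ positions also range over representatives of $\tilde M$ in $M$, adjoin the corresponding $A$-subgroups to $N_I$, and refine $M_I$ to be the common intersection of all resulting $\tilde M$'s. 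This adds only $(j,m,s)$-boundedly many applications and preserves the bounds; by construction, for each $m_l\in M_I$ there is a suitable $\vec\alpha$-index for which basic-light's coset condition holds, and combined with the modulo-$N$ reduction above this yields $[W,w_I(K;M_I)]\leq N_I$.
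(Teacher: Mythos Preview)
Your overall architecture matches the paper's: apply Lemma~\ref{basic-light} once for each $I$-tuple of $M$-coset representatives, intersect the resulting $\tilde M_{\vec\alpha}$'s to form $M_I$, and take $N_I=N\cdot\prod_{\vec\alpha}A_{\vec\alpha}$. Your ``modulo-$N$'' version of Corollary~\ref{M2} is also correct and is exactly what the paper does, just packaged differently: the paper passes to $\bar K=K/Z$, where $Z/N_I$ is the centre of $WN_I/N_I$ in $K/N_I$, so that ``$[W,\,\cdot\,]\le N_I$'' becomes ``$\,\bar\cdot=1$'' and Corollaries~\ref{uno2} and \ref{M2} apply verbatim.

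The gap is in your handling of the non-$I$ positions. Your ``second batch'' does not resolve the alignment problem: if you rerun Lemma~\ref{basic-light} with inputs ranging over representatives of $\tilde M$ in $M$ at the non-$I$ slots, the lemma again returns \emph{specific} elements $\tilde k_l^{\mathrm{new}}$ and a \emph{smaller} subgroup $\tilde M''$, and you are back to needing $m_l\in\tilde k_l^{\mathrm{new}}\tilde M''$ with no control over which $\tilde M''$-coset $\tilde k_l^{\mathrm{new}}$ lands in. This is the same obstruction one level down, so the scheme regresses indefinitely. (Separately, the number of extra applications is $[M:\tilde M]^{\,n-s}$, which is not $(j,m,s)$-bounded as you claim.)

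The missing ingredient is Corollary~\ref{uno2}. After the \emph{first} batch you already have $[W,\,w(d_1 M_I,\dots,d_n M_I)]\le N_I$ for each tuple, hence $\overline{w(d_1 M_I,\dots,d_n M_I)}=1$ in $\bar K$. Since the non-$I$ entries $d_l$ lie in $M$, Corollary~\ref{uno2} (applied in $\bar K$ with the normal subgroup $\overline{M_I}$) immediately gives $\overline{w_I(d_i M_I;\,M_I)}=1$, i.e.\ the non-$I$ slots may be filled by \emph{arbitrary} elements of $M_I$. Then your modulo-$N$ analogue of Corollary~\ref{M2} (equivalently, Corollary~\ref{M2} in $\bar K$) replaces each $k_i$ by the corresponding $d_i$, and you are done. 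No second batch is needed.
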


\begin{proof}
 Consider a set $C$ of coset representatives of $M$ in $K$, and 
  let $\Omega$ be the set of $n$-tuples  $\underline{c}=(c_1, \dots , c_n)$ where $c_r \in C$ if $r\in I$ and $c_r=1$ otherwise. 
   Notice that the order of $\Omega$ is $j^s$. 
 For any  $n$-tuple  $\underline{c}=(c_1, \dots , c_n) \in \Omega$,  by  Lemma \ref{basic-light}, 
 there exist elements $d_i\in  c_i M$, with
 $i=1,\dots,n$ and a normal subgroup $M_{\underline c}$ of $(j,m)$-bounded index in $K$ such that the order of 
$$[W,w(d_1  M_{\underline c},\dots,d_n M_{\underline c})]^K$$
 is $m$-bounded.
Let 
\begin{eqnarray*}
M_I&=&M \cap \bigg( \bigcap_{\underline{c}\in \Omega}M_{\underline c}\bigg),\\
N_I&=& N \, \prod_{\underline{c}\in \Omega}[W,w(d_1  M_{\underline c},\dots,d_n M_{\underline c})]^K.
\end{eqnarray*}
As $|\Omega|=j^s$,
 it follows that 
 $M_I$ has $(j,m,s)$-bounded index in $K$ and $N_I$ has $(r,j,m,s)$-bounded order.

Let $Z/N_I$ be the center of $WN_I/N_I$ in the quotient group $K/N_I$ and let $\bar K=K/Z$. The image of a subgroup
$U$ of $K$ in $\bar G$ will be denoted by $\bar U$, and similarly for the image of an element.

Let us consider an arbitrary element $w_I(k_i,h_l)\in w_I (K; M_I)$, with $k_i \in K$ and $h_l \in M_I$.
Consider the $n$-tuple  $\underline{c}=(c_1, \dots , c_n) \in \Omega$ defined by $k_i\in c_{i}M$ if $i\in I$ and $c_i=1$ otherwise. 
 Let $d_1,\dots,d_n$ the elements as above, corresponding to the  $n$-tuple  $\underline{c}$. 
 Then 
$$[W,w(d_1  M_I,\dots,d_n M_I)]\le N_I,$$
 that is 
$$\overline{w(d_1  M_I,\dots,d_n M_I)}=1,$$
 in the quotient group $\bar K=K/Z$. 
By Corollary \ref{uno2}, we deduce that 
\begin{equation}\label{step}
 \overline{w_I(d_i  M_I;M_I)}=1. 
\end{equation}
Moreover, as $c_{i}M=d_iM$, we have that $k_i=d_iv_i$ for some $v_i\in M$.
It also follows from our assumptions that  $$\overline{w_J (K; M)}=1$$
for every proper subset $J$ of $I$. Thus we can apply Corollary \ref{M2} and we obtain that 
$$w_I (\overline k_i;\overline h_l)=w_I (\overline d_i\overline v_i;\overline h_l)=
w_I (\overline d_i;\overline h_l)=1,$$
where in the last equality we have used (\ref{step}).
Since $w_I(k_i,h_l)$ was an arbitrary element of $ w_I (K; M_I)$, 
 it follows that $$\overline{w_I (K; M_I)}=1,$$
  that is \[ [W,w_I (K; M_I)]\le N_I,\]
as desired.
\end{proof}

Now the proof of Theorem \ref{together}  is an easy induction.

\begin{proof}[Proof of Theorem \ref{together}.]
We will prove that for every $s=0,\dots,n$  there exist a finite normal subgroup $N_s$ of $K$ of $(m,n)$-bounded order and
a normal subgroup $M_s$ of $K$ of $(m,n)$-bounded index such that
\[ [W,w_I (K; M_s)]\le N_s\]
for every subset $I$ of $\{1,\dots,n\}$ with $|I|\le s$.
Once this is done, the theorem will follow taking $s=n$.

Assume that $s=0$. 
We apply Lemma \ref{basic-light} with $M=K$ and $k_i=1$ for every $i=1,\dots,n$. 
Thus  there exist  $a_1,\dots,a_n \in K$ and  a normal subgroup of $m$-bounded index  $M_0$ of $K$, such that the order of 
$$N_0=[W,w( a_1 M_0 ,\dots , a_n M_0)]^K$$
 is $m$-bounded.

Let $Z/N_0$ be the center of $WN_0/N_o$ in the quotient group $K/N_0$ and let $\bar K=K/Z$.
We have that 
$$\overline{w( a_1 M_0 ,\dots , a_n M_0)}=1,$$
 so it follows from Corollary \ref{uno2} that
$$\overline{w( M_0 ,\dots ,  M_0)}=1,$$
 that is, $[W,w(M_0)]\le N_0$. This proves the case $s=0$.

Now assume  $s\ge 1$. Choose
$I\subseteq\{1,\dots,n\}$ with $|I|=s$. By induction, the hypotheses of Proposition \ref{inductive-step} are satisfied with $N=N_{s-1}$
and $M=M_{s-1}$, so there exist a finite normal subgroup $N_I$ of $K$ of $(m,n)$-bounded order with
$N_{s-1}\le N_I$ and  
a normal subgroup $M_I$ of $K$ of $(m,n)$-bounded index with $M_I\le M_{s-1}$ such that
\[ [W,w_I (K; M_I)]\le N_I.\]

Let
 $$M_s=\bigcap_{|I|=s}M_I, \quad N_s=\prod_{|I|=s}N_I,$$
  where the intersection (resp. the product) ranges over all subsets $I$ of $\{1,\dots,n\}$ of size $s$.

As there is a $n$-bounded number of choices for $I$, it follows that $N_s$ (resp. $M_s$) has  $(m,n)$-bounded order (resp. $(m,n)$-bounded index 
in $K$).  Note that $M_s\le M_{s-1}$ and $N_{s-1}\le N_s$.
Therefore
\[ [W,w_I (K; M_s)]\le N_s\] 
 for every $I\subseteq\{1,\dots,n\}$ with $|I|\le s$. 
This completes the induction step and the proof of the theorem.
\end{proof}

 \begin{proof}[Proof of Theorem \ref{aaa}.]
Let $w=w(x_1,\dots,x_n)$ be a multilinear commutator, and let $G$ be a group such that $|x^G|\leq m$ for every $w$-value $x$ in $G$.  
Apply Theorem \ref{together} with $K=G$. It follows that $[w(G),w(G)]$ has $(m,n)$-bounded order, as desired. 
\end{proof}

 \begin{proof}[Proof of Theorem \ref{bbb}.]
Let $w=w(x_1,\dots,x_n)$ be a multilinear commutator, and let $G$ be a group such that $|x^{w(G)}|\leq m$ for every $w$-value $x$ in $G$.  
Apply Theorem \ref{together} with $K=w(G)$. It follows  that $[w(w(G)),w(G)]$ has $(m,n)$-bounded order, as desired. 
\end{proof}

\end{document}